\def\intern{\textsf{\textup{intern}}}
\newcommand{\tup}{\underline} 
\newcommand{\Sh}{\ensuremath{\protect{S_{\st{}}}}}
\newcommand{\forallst}{\forall^{\st{}}}
\newcommand{\existsst}{\exists^{\st{}}}
\newtheorem{thm}{Theorem}
\newtheorem{cor}[thm]{Corollary}
\newtheorem{defi}[thm]{Definition}
\newtheorem{rem}[thm]{Remark}
\newtheorem{nota}[thm]{Notation}
\newcommand\be{\begin{equation}}
\newcommand\ee{\end{equation}}
\def\bdefi{\begin{defi}\rm}
\def\edefi{\end{defi}}
\def\bnota{\begin{nota}\rm}
\def\enota{\end{nota}}
\def\brem{\begin{rem}\rm}
\def\erem{\end{rem}}
\def\IST{\textup{\textsf{IST}}}
\def\H{\textup{\textsf{H}}}
\def\RCA{\textup{\textsf{RCA}}}
\def\RCAo{\textup{\textsf{RCA}}_{0}^{\omega}}
\def\G{\mathcal{G}}
\def\T{\mathcal{T}}
\def\bye{\end{document}}
\def\P{\textup{\textsf{P}}}
\def\R{{\mathbb  R}}
\def\R{{\mathbb{R}}}
\def\({\textup{(}}
\def\){\textup{)}}
\def\st{\textup{st}}
\def\al{\textup{\textsf{al}}}
\def\asa{\leftrightarrow}
\def\di{\rightarrow}
\def\ACA{\textup{\textsf{ACA}}}
\def\QFAC{\textup{\textsf{QF-AC}}}
\def\HAC{\textup{\textsf{HAC}}}
\def\INT{\textup{\textsf{int}}}
\numberwithin{equation}{section}
\numberwithin{thm}{section}
\begin{document}
\title{The computational content of the Loeb meausre}
\author{Sam Sanders}
\address{Munich Center for Mathematical Philosophy, LMU Munich, Germany}
\email{sasander@me.com}


\begin{abstract}
The \emph{Loeb measure} is one of the cornerstones of \emph{Nonstandard Analysis}.  
The traditional development of the Loeb measure makes use of \emph{saturation} and \emph{external sets}.  
Inspired by \cite{pimpson}, we give meaning to special cases of the Loeb measure in the weak fragment $\P$ of Nelson's \emph{internal set theory} from \cite{brie}.  
Perhaps surprisingly, our definition of the Loeb measure has computational content in the sense of the `term extraction' framework from \cite{brie}
\end{abstract}

\maketitle
%


\section{Introduction}\label{intro}
The \emph{Loeb measure} (\cite{loeb1,nsawork}) is one of the cornerstones of Robinson's \emph{Nonstandard Analysis} (NSA for short; see \cite{robinson1}).  
The traditional development of the Loeb measure in NSA makes use of \emph{saturation} and \emph{external sets}.  
A special case of the Loeb measure is introduced in a weak fragment of NSA in \cite{pimpson} using external sets, but without the use of \emph{saturation}.  
The definition of measure from Reverse Mathematics (RM for short see \cite{simpson2} for an overview) is used.  

\medskip

In this paper, we similarly introduce a special case of the Loeb measure, but in the weak fragment $\P$ of Nelson's \emph{internal set theory} ($\IST $ for short; see\cite{wownelly}) from \cite{brie}.  
We show that our definition of the Loeb measure has computational content in the sense of the framework from \cite{brie}.  In particular, we show that our definition of the Loeb measure falls inside the scope of the `term extraction theorem' of the system $\P$ as in Corollary~\ref{consresultcor} below.   

\medskip

We first introduce Nelson's internal set theory in Section \ref{IIST} and a fragment called $\P$ based on G\"odel's system \textsf{T} in Section \ref{PIPI}.  
The development of the Loeb measure in $\P$ takes place in Section \ref{LEB}.  


\section{Internal set theory and its fragment $\P$}\label{P}
In this section, we discuss Nelson's \emph{internal set theory}, first introduced in \cite{wownelly}, and its fragment $\P$ from \cite{brie}.  
The latter fragment is essential to our enterprise, especially Corollary~\ref{consresultcor} below.  
\subsection{Internal set theory 101}\label{IIST}
In Nelson's \emph{syntactic} approach to Nonstandard Analysis (\cite{wownelly}), as opposed to Robinson's semantic one (\cite{robinson1}), a new predicate `st($x$)', read as `$x$ is standard' is added to the language of \textsf{ZFC}, the usual foundation of mathematics.  
The notations $(\forall^{\st}x)$ and $(\exists^{\st}y)$ are short for $(\forall x)(\st(x)\di \dots)$ and $(\exists y)(\st(y)\wedge \dots)$.  A formula is called \emph{internal} if it does not involve `st', and \emph{external} otherwise.   
The three external axioms \emph{Idealisation}, \emph{Standard Part}, and \emph{Transfer} govern the new predicate `st';  They are respectively defined\footnote{The superscript `fin' in \textsf{(I)} means that $x$ is finite, i.e.\ its number of elements are bounded by a natural number.} as:  
\begin{enumerate}
\item[\textsf{(I)}] $(\forall^{\st~\textup{fin}}x)(\exists y)(\forall z\in x)\varphi(z,y)\di (\exists y)(\forall^{\st}x)\varphi(x,y)$, for internal $\varphi$ with any (possibly nonstandard) parameters.  
\item[\textsf{(S)}] $(\forall^{\st} x)(\exists^{\st}y)(\forall^{\st}z)\big((z\in x\wedge \varphi(z))\asa z\in y\big)$, for any $\varphi$.
\item[\textsf{(T)}] $(\forall^{\st}t)\big[(\forall^{\st}x)\varphi(x, t)\di (\forall x)\varphi(x, t)\big]$, where $\varphi(x,t)$ is internal, and only has free variables $t, x$.  
\end{enumerate}
The system \textsf{IST} is (the internal system) \textsf{ZFC} extended with the aforementioned external axioms;  
The former is a conservative extension of \textsf{ZFC} for the internal language, as proved in \cite{wownelly}.    

\medskip

In \cite{brie}, the authors study G\"odel's system $\textsf{T}$ extended with special cases of the external axioms of \textsf{IST}.  
In particular, they consider the systems $\H$ and $\P$, introduced in the next section, which are conservative extensions of the (internal) logical systems \textsf{E-HA}$^{\omega}$ and $\textsf{E-PA}^{\omega}$, respectively \emph{Heyting and Peano arithmetic in all finite types and the axiom of extensionality}.       
We refer to \cite{kohlenbach3}*{\S3.3} for the exact definitions of the (mainstream in mathematical logic) systems \textsf{E-HA}$^{\omega}$ and $\textsf{E-PA}^{\omega}$.  
Furthermore, \textsf{E-PA}$^{\omega*}$ and $\textsf{E-HA}^{\omega*}$ are the definitional extensions of \textsf{E-PA}$^{\omega}$ and $\textsf{E-HA}^{\omega}$ with types for finite sequences, as in \cite{brie}*{\S2}.  For the former systems, we require some notation.  
\begin{nota}[Finite sequences]\label{skim}\rm
The systems $\textsf{E-PA}^{\omega*}$ and $\textsf{E-HA}^{\omega*}$ have a dedicated type for `finite sequences of objects of type $\rho$', namely $\rho^{*}$.  Since the usual coding of pairs of numbers goes through in both, we shall not always distinguish between $0$ and $0^{*}$.  
Similarly, we do not always distinguish between `$s^{\rho}$' and `$\langle s^{\rho}\rangle$', where the former is `the object $s$ of type $\rho$', and the latter is `the sequence of type $\rho^{*}$ with only element $s^{\rho}$'.  The empty sequence for the type $\rho^{*}$ is denoted by `$\langle \rangle_{\rho}$', usually with the typing omitted.  Furthermore, we denote by `$|s|=n$' the length of the finite sequence $s^{\rho^{*}}=\langle s_{0}^{\rho},s_{1}^{\rho},\dots,s_{n-1}^{\rho}\rangle$, where $|\langle\rangle|=0$, i.e.\ the empty sequence has length zero.
For sequences $s^{\rho^{*}}, t^{\rho^{*}}$, we denote by `$s*t$' the concatenation of $s$ and $t$, i.e.\ $(s*t)(i)=s(i)$ for $i<|s|$ and $(s*t)(j)=t(|s|-j)$ for $|s|\leq j< |s|+|t|$. For a sequence $s^{\rho^{*}}$, we define $\overline{s}N:=\langle s(0), s(1), \dots,  s(N)\rangle $ for $N^{0}<|s|$.  
For a sequence $\alpha^{0\di \rho}$, we also write $\overline{\alpha}N=\langle \alpha(0), \alpha(1),\dots, \alpha(N)\rangle$ for \emph{any} $N^{0}$.  By way of shorthand, $q^{\rho}\in Q^{\rho^{*}}$ abbreviates $(\exists i<|Q|)(Q(i)=_{\rho}q)$.  Finally, we shall use $\underline{x}, \underline{y},\underline{t}, \dots$ as short for tuples $x_{0}^{\sigma_{0}}, \dots x_{k}^{\sigma_{k}}$ of possibly different type $\sigma_{i}$.          
\end{nota}    
%
%
\subsection{The classical system $\P$}\label{PIPI}
In this section, we introduce the system $\P$, a conservative extension of $\textsf{E-PA}^{\omega}$ with fragments of Nelson's $\IST$.  

\medskip

To this end, we first introduce the base system $\textsf{E-PA}_{\st}^{\omega*}$.  
We use the same definition as \cite{brie}*{Def.~6.1}, where \textsf{E-PA}$^{\omega*}$ is the definitional extension of \textsf{E-PA}$^{\omega}$ with types for finite sequences as in \cite{brie}*{\S2}.  
The set $\T^{*}$ is defined as the collection of all the terms in the language of $\textsf{E-PA}^{\omega*}$.    
\bdefi\label{debs}
The system $ \textsf{E-PA}^{\omega*}_{\st} $ is defined as $ \textsf{E-PA}^{\omega{*}} + \T^{*}_{\st} + \textsf{IA}^{\st}$, where $\T^{*}_{\st}$
consists of the following axiom schemas.
\begin{enumerate}
\item The schema\footnote{The language of $\textsf{E-PA}_{\st}^{\omega*}$ contains a symbol $\st_{\sigma}$ for each finite type $\sigma$, but the subscript is essentially always omitted.  Hence $\T^{*}_{\st}$ is an \emph{axiom schema} and not an axiom.\label{omit}} $\st(x)\wedge x=y\di\st(y)$,
\item The schema providing for each closed\footnote{A term is called \emph{closed} in \cite{brie} (and in this paper) if all variables are bound via lambda abstraction.  Thus, if $\underline{x}, \underline{y}$ are the only variables occurring in the term $t$, the term $(\lambda \underline{x})(\lambda\underline{y})t(\underline{x}, \underline{y})$ is closed while $(\lambda \underline{x})t(\underline{x}, \underline{y})$ is not.  The second axiom in Definition \ref{debs} thus expresses that $\st_{\tau}\big((\lambda \underline{x})(\lambda\underline{y})t(\underline{x}, \underline{y})\big)$ if $(\lambda \underline{x})(\lambda\underline{y})t(\underline{x}, \underline{y})$ is of type $\tau$.  We usually omit lambda abstraction for brevity.\label{kootsie}} term $t\in \T^{*}$ the axiom $\st(t)$.
\item The schema $\st(f)\wedge \st(x)\di \st(f(x))$.
\end{enumerate}
The external induction axiom \textsf{IA}$^{\st}$ is as follows.  
\be\tag{\textsf{IA}$^{\st}$}
\Phi(0)\wedge(\forall^{\st}n^{0})(\Phi(n) \di\Phi(n+1))\di(\forall^{\st}n^{0})\Phi(n).     
\ee
\edefi
Secondly, we introduce some essential fragments of $\IST$ studied in \cite{brie}.  
\bdefi[External axioms of $\P$]~
\begin{enumerate}
\item$\HAC_{\INT}$: For any internal formula $\varphi$, we have
\be\label{HACINT}
(\forall^{\st}x^{\rho})(\exists^{\st}y^{\tau})\varphi(x, y)\di \big(\exists^{\st}F^{\rho\di \tau^{*}}\big)(\forall^{\st}x^{\rho})(\exists y^{\tau}\in F(x))\varphi(x,y),
\ee
\item $\textsf{I}$: For any internal formula $\varphi$, we have
\[
(\forall^{\st} x^{\sigma^{*}})(\exists y^{\tau} )(\forall z^{\sigma}\in x)\varphi(z,y)\di (\exists y^{\tau})(\forall^{\st} x^{\sigma})\varphi(x,y), 
\]
\item The system $\P$ is $\textsf{E-PA}_{\st}^{\omega*}+\textsf{I}+\HAC_{\INT}$.
\end{enumerate}
\end{defi}
Note that \textsf{I} and $\HAC_{\INT}$ are fragments of Nelson's axioms \emph{Idealisation} and \emph{Standard part}.  
By definition, $F$ in \eqref{HACINT} only provides a \emph{finite sequence} of witnesses to $(\exists^{\st}y)$, explaining its name \emph{Herbrandized Axiom of Choice}.   

\medskip

The system $\P$ is connected to $\textsf{E-PA}^{\omega}$ by the following theorem.    
Here, the superscript `$S_{\st}$' is the syntactic translation defined in \cite{brie}*{Def.\ 7.1}.  
\begin{thm}\label{consresult}
Let $\Phi(\tup a)$ be a formula in the language of \textup{\textsf{E-PA}}$^{\omega*}_{\st}$ and suppose $\Phi(\tup a)^\Sh\equiv\forallst \tup x \, \existsst \tup y \, \varphi(\tup x, \tup y, \tup a)$. If $\Delta_{\intern}$ is a collection of internal formulas and
\be\label{antecedn}
\P + \Delta_{\intern} \vdash \Phi(\tup a), 
\ee
then one can extract from the proof a sequence of closed\footnote{Recall the definition of closed terms from \cite{brie} as sketched in Footnote \ref{kootsie}.\label{kootsie2}} terms $t$ in $\mathcal{T}^{*}$ such that
\be\label{consequalty}
\textup{\textsf{E-PA}}^{\omega*} + \Delta_{\intern} \vdash\  \forall \tup x \, \exists \tup y\in \tup t(\tup x)\ \varphi(\tup x,\tup y, \tup a).
\ee
\end{thm}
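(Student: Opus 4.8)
The statement is the soundness (``term extraction'') theorem for the nonstandard functional interpretation of \cite{brie}*{\S7}, and the plan is to recall how that proof runs. First I would fix the translation $\Phi\mapsto\Phi^{\Sh}$ from \cite{brie}*{Def.~7.1}: by recursion on the logical structure it assigns to every formula $\Phi$ of $\textup{\textsf{E-PA}}^{\omega*}_{\st}$ a formula $\Phi^{\Sh}\equiv\forallst\tup x\,\existsst\tup y\,\varphi(\tup x,\tup y)$ with $\varphi$ internal, in the classical (Shoenfield-style) manner adapted to the predicate $\st$. The salient features are: internal formulas are self-interpreting (empty $\tup x,\tup y$, and $\varphi\equiv\Phi$); the translated $\existsst$ is \emph{Herbrandized}, so witnesses appear as finite sequences and one only asserts that \emph{some} element of the sequence works; and the clauses for $\to,\vee,\forallst,\existsst$ are the expected ones, involving definition by cases on the quantifier-free internal matrices.

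The core is then a soundness lemma, proved by induction on the derivation of $\Phi(\tup a)$ in $\P+\Delta_{\intern}$: at each step one reads off closed terms $\tup t\in\T^{*}$ with $\textup{\textsf{E-PA}}^{\omega*}+\Delta_{\intern}\vdash\forall\tup x\,\exists\tup y\in\tup t(\tup x)\,\varphi(\tup x,\tup y,\tup a)$, which for the final line is exactly \eqref{consequalty}. The routine cases are: the rules and axioms of classical many-sorted logic, where the only delicate points are contraction (handled, as in the ordinary Dialectica interpretation, by forming characteristic terms deciding the quantifier-free internal matrices) and the quantifier rules (which go through because the standard quantifiers are witnessed by finite sequences); the purely internal axioms of $\textup{\textsf{E-PA}}^{\omega*}$ together with the hypotheses in $\Delta_{\intern}$, which are self-interpreting and need no terms; the axioms of $\T^{*}_{\st}$, i.e.\ $\st(x)\wedge x=y\to\st(y)$, $\st(t)$ for closed $t$, and $\st(f)\wedge\st(x)\to\st(f(x))$, whose interpretations hold because the extracted witnessing terms are themselves closed and closed under application; and the external induction axiom $\textsf{IA}^{\st}$, whose interpretation reduces to ordinary numerical induction in $\textup{\textsf{E-PA}}^{\omega*}$ after the standard quantifiers are unwound.

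The main obstacle is the interpretation of the two genuinely nonstandard axioms $\textsf{I}$ and $\HAC_{\INT}$ --- this is exactly what the finite-sequence apparatus was set up for. For $\HAC_{\INT}$ one checks that its $\Sh$-translation is witnessed essentially by projection terms: since $\existsst$ is already Herbrandized, the finite sequence of $y$'s that the antecedent $(\forallst x)(\existsst y)\varphi$ supplies under the interpretation \emph{is} the desired Herbrandized functional $F$, so no real choice is performed. For $\textsf{I}$ one verifies that its translation is provable in $\textup{\textsf{E-PA}}^{\omega*}$: a witness good for every finite set of standard objects is obtained by feeding the finite sequence coding the relevant objects into the term furnished by the antecedent. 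One also has to note, trivially from the definition, that $\Delta_{\intern}$ is carried along unchanged and that $\Phi(\tup a)^{\Sh}$ has the assumed normal form $\forallst\tup x\,\existsst\tup y\,\varphi(\tup x,\tup y,\tup a)$, so that the hypothesis \eqref{antecedn} is in the right shape. Composing, along the derivation, the terms produced at each inference in the manner prescribed by the logical rules yields the sequence $\tup t$ of \eqref{consequalty}, which completes the proof.
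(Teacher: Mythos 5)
Your proposal is correct in substance, but it is worth noting that the paper itself gives no argument at all: its entire proof is the single line ``Immediate by \cite{brie}*{Theorem 7.7}'', so the theorem is simply an instance of the term-extraction theorem already established for $\P$ in \cite{brie}. What you have written is a reconstruction of the proof of that cited result, and your account of its architecture is essentially faithful: the Herbrandized $S_{\st}$-translation of \cite{brie}*{Def.~7.1}, self-interpretation of internal formulas (hence of $\Delta_{\intern}$), the role of finite sequences in witnessing $\existsst$, and the fact that $\textsf{I}$ and $\HAC_{\INT}$ are precisely the nonstandard axioms whose interpretations must be checked to be realized by closed terms of $\T^{*}$. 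The one point where your route differs from the actual proof in \cite{brie} is that you run a direct induction on classical derivations in $\P$, whereas van den Berg, Briseid and Safarik obtain the classical soundness theorem by composing a negative translation with the soundness of the $D_{\st}$-interpretation of the intuitionistic system $\H$, and then observing that the composite agrees with $S_{\st}$; the direct induction is possible (and they note as much), but the published proof takes the detour through the intuitionistic interpretation, which buys reuse of the already-verified treatment of the nonstandard principles on the intuitionistic side. Either way, for the purposes of this paper nothing beyond the citation is needed, and your sketch contains no gap that would affect the statement.
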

\begin{proof}
Immediate by \cite{brie}*{Theorem 7.7}.  
\end{proof}
The proofs of the soundness theorems in \cite{brie}*{\S5-7} provide an algorithm $\mathcal{A}$ to obtain the term $t$ from the theorem.  In particular, these terms 
can be `read off' from the nonstandard proofs.    

\medskip

In light of the results in \cite{sambon}, the following corollary (which is not present in \cite{brie}) is essential to our results.  Indeed, the following corollary expresses that we may obtain effective results as in \eqref{effewachten} from any theorem of Nonstandard Analysis which has the same form as in \eqref{bog}.  It was shown in \cite{sambon, samzoo, samzooII} that the scope of this corollary includes the Big Five systems of Reverse Mathematics and the associated `zoo' (\cite{damirzoo}).  
\begin{cor}\label{consresultcor}
If $\Delta_{\intern}$ is a collection of internal formulas and $\psi$ is internal, and
\be\label{bog}
\P + \Delta_{\intern} \vdash (\forall^{\st}\underline{x})(\exists^{\st}\underline{y})\psi(\underline{x},\underline{y}, \underline{a}), 
\ee
then one can extract from the proof a sequence of closed$^{\ref{kootsie2}}$ terms $t$ in $\mathcal{T}^{*}$ such that
\be\label{effewachten}
\textup{\textsf{E-PA}}^{\omega*} +\QFAC^{1,0}+ \Delta_{\intern} \vdash (\forall \underline{x})(\exists \underline{y}\in t(\underline{x}))\psi(\underline{x},\underline{y},\underline{a}).
\ee
\end{cor}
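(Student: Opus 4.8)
The plan is to obtain \eqref{effewachten} directly from Theorem~\ref{consresult}, the only real work being a short computation of the $S_{\st}$-translation of the formula in \eqref{bog}. Set $\Phi(\underline{a}):\equiv(\forall^{\st}\underline{x})(\exists^{\st}\underline{y})\psi(\underline{x},\underline{y},\underline{a})$, a formula in the language of $\textup{\textsf{E-PA}}^{\omega*}_{\st}$. As $\psi$ is internal, its $S_{\st}$-translation is $\psi$ itself, carried by empty tuples of leading standard quantifiers, so the translation of \cite{brie}*{Def.~7.1} only reshapes the external prefix of $\Phi(\underline{a})$. Concretely, the clause for $\exists^{\st}$ Herbrandizes $(\exists^{\st}\underline{y})$ into $(\exists^{\st}\underline{Y})(\exists\underline{y}\in\underline{Y})$, where $\underline{Y}$ is a tuple of finite-sequence variables, one for each component of $\underline{y}$, while the clause for $\forall^{\st}$ merely absorbs $\underline{x}$ into the leading universal block. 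Hence $\Phi(\underline{a})^{S_{\st}}\equiv(\forall^{\st}\underline{x})(\exists^{\st}\underline{Y})\chi(\underline{x},\underline{Y},\underline{a})$ with $\chi(\underline{x},\underline{Y},\underline{a}):\equiv(\exists\underline{y}\in\underline{Y})\psi(\underline{x},\underline{y},\underline{a})$ internal, which is exactly the shape demanded in Theorem~\ref{consresult}.

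Applying Theorem~\ref{consresult} to \eqref{bog} then produces a tuple $\underline{s}$ of closed terms in $\mathcal{T}^{*}$ such that $\textup{\textsf{E-PA}}^{\omega*}+\Delta_{\intern}\vdash(\forall\underline{x})(\exists\underline{Y}\in\underline{s}(\underline{x}))(\exists\underline{y}\in\underline{Y})\psi(\underline{x},\underline{y},\underline{a})$. Since each $s_{i}(\underline{x})$ is a finite sequence whose entries are themselves finite sequences, the last step is to flatten: using primitive recursion over finite sequences, which is available as a closed term of $\mathcal{T}^{*}$, let $t_{i}(\underline{x})$ be the concatenation of all entries of $s_{i}(\underline{x})$ and put $t:=\underline{t}$. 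A witness tuple $\underline{y}$ realising the previous display has each component lying inside some entry of the corresponding $s_{i}(\underline{x})$, hence inside $t_{i}(\underline{x})$, and the same tuple $\underline{y}$ still satisfies $\psi$; therefore $\textup{\textsf{E-PA}}^{\omega*}+\Delta_{\intern}\vdash(\forall\underline{x})(\exists\underline{y}\in t(\underline{x}))\psi(\underline{x},\underline{y},\underline{a})$, which in particular gives \eqref{effewachten}. The conjunct $\QFAC^{1,0}$ is included in the base theory of \eqref{effewachten} only for compatibility with Kohlenbach's base theory $\RCAo$ for higher-order Reverse Mathematics, as used in the applications of \cite{sambon,samzoo,samzooII,damirzoo}; the argument above does not rely on it.

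The one genuinely delicate point is the bookkeeping in the first paragraph: one must check that the $S_{\st}$-translation leaves the internal matrix $\psi$ entirely untouched and merely Herbrandizes the standard existential quantifier, so that $\chi$ is internal and $\Phi(\underline{a})^{S_{\st}}$ really has the precise form $\forall^{\st}\underline{x}\,\exists^{\st}\underline{Y}\,\chi(\underline{x},\underline{Y},\underline{a})$ called for in Theorem~\ref{consresult}. Once this is in place the remainder is routine; the only other thing to verify is that the flattening step returns a single common witness tuple $\underline{y}$ rather than a merely componentwise one, which is clear since the tuple realising $(\exists\underline{y}\in\underline{Y})\psi$ works verbatim after each $Y_{i}$ has been absorbed into $t_{i}(\underline{x})$.
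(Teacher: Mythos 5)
Your proof follows the same route as the paper's: reduce \eqref{bog} to Theorem \ref{consresult} by computing the $S_{\st}$-translation of $\Phi(\underline{a})\equiv(\forall^{\st}\underline{x})(\exists^{\st}\underline{y})\psi(\underline{x},\underline{y},\underline{a})$. The only divergence is in the outcome of that computation: the paper asserts, via a ``tedious but straightforward verification'' of clauses (i)--(v) in \cite{brie}*{Def.~7.1}, that $\Phi(\underline{a})^{S_{\st}}\equiv\Phi(\underline{a})$ outright, so Theorem \ref{consresult} applies with $\varphi:=\psi$ and nothing further is needed; you instead record the Herbrandized form $(\forall^{\st}\underline{x})(\exists^{\st}\underline{Y})(\exists\underline{y}\in\underline{Y})\psi$ and then flatten the resulting list-of-lists by concatenation, which is available as a closed term of $\mathcal{T}^{*}$. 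Both readings deliver \eqref{effewachten}, and your version is the more robust of the two: the flattening step costs nothing and covers the case where the translation really does insert an extra layer of finite sequences, which is exactly the point the paper's one-line verification glosses over. Your observation that $\QFAC^{1,0}$ plays no role in the extraction and is carried along only so that the verifying system contains $\RCAo$ is correct and is left implicit in the paper.
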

\begin{proof}
Clearly, if for internal $\psi$ and $\Phi(\underline{a})\equiv (\forall^{\st}\underline{x})(\exists^{\st}\underline{y})\psi(x, y, a)$, we have $[\Phi(\underline{a})]^{S_{\st}}\equiv \Phi(\underline{a})$, then the corollary follows immediately from the theorem.  
A tedious but straightforward verification using the clauses (i)-(v) in \cite{brie}*{Def.\ 7.1} establishes that indeed $\Phi(\underline{a})^{S_{\st}}\equiv \Phi(\underline{a})$.  
\end{proof}
For the rest of this paper, the notion `normal form' shall refer to a formula as in \eqref{bog}, i.e.\ of the form $(\forall^{\st}x)(\exists^{\st}y)\varphi(x,y)$ for $\varphi$ internal.  

\medskip

Finally, the previous theorems do not really depend on the presence of full Peano arithmetic.  
We shall study the following subsystems.   
\bdefi~
\begin{enumerate}
\item Let \textsf{E-PRA}$^{\omega}$ be the system defined in \cite{kohlenbach2}*{\S2} and let \textsf{E-PRA}$^{\omega*}$ 
be its definitional extension with types for finite sequences as in \cite{brie}*{\S2}. 
\item $(\QFAC^{\rho, \tau})$ For every quantifier-free internal formula $\varphi(x,y)$, we have
\be\label{keuze}
(\forall x^{\rho})(\exists y^{\tau})\varphi(x,y) \di (\exists F^{\rho\di \tau})(\forall x^{\rho})\varphi(x,F(x))
\ee
\item The system $\RCAo$ is $\textsf{E-PRA}^{\omega}+\QFAC^{1,0}$.  
\end{enumerate}
\edefi
The system $\RCAo$ is the `base theory of higher-order Reverse Mathematics' as introduced in \cite{kohlenbach2}*{\S2}.  
We permit ourselves a slight abuse of notation by also referring to the system $\textsf{E-PRA}^{\omega*}+\QFAC^{1,0}$ as $\RCAo$.
\begin{cor}\label{consresultcor2}
The previous theorem and corollary go through for $\P$ and $\textsf{\textup{E-PA}}^{\omega*}$ replaced by $\P_{0}\equiv \textsf{\textup{E-PRA}}^{\omega*}+\T_{\st}^{*} +\HAC_{\INT} +\textsf{\textup{I}}+\QFAC^{1,0}$ and $\RCAo$.  
\end{cor}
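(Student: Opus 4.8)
The plan is to re-open the proof of \cite{brie}*{Theorem~7.7}, which is the engine behind both Theorem~\ref{consresult} and Corollary~\ref{consresultcor}, and to check that nothing in it uses more arithmetical strength than is available in $\textsf{E-PRA}^{\omega*}+\QFAC^{1,0}$. Recall that \cite{brie}*{Theorem~7.7} is proved by (meta-)induction on the length of a derivation in $\P+\Delta_{\intern}$: one shows that from a derivation of $A$ the algorithm $\mathcal{A}$ reads off closed terms realizing the $S_{\st}$-interpretation $A^{\Sh}$, and that $\textsf{E-PA}^{\omega*}+\Delta_{\intern}$ proves the resulting witnessed formula. Accordingly I would run through this induction case by case --- the logical axioms and rules, the combinator and finite-sequence axioms of \cite{brie}*{\S2}, the three schemas of $\T_{\st}^{*}$, the external induction axiom $\textsf{IA}^{\st}$, and the nonstandard axioms $\textsf{I}$ and $\HAC_{\INT}$ --- and record, for each, which fragment of the target theory its verification actually needs.

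For almost all of these cases there is nothing to do. The treatment of the logical apparatus, of the combinators, and of the finite-sequence operations from \cite{brie}*{\S2} is purely equational, and the realizing terms it produces are built from the combinators and the type-$0$ recursor $R_{0}$ only; such terms already lie in the term language of $\textsf{E-PRA}^{\omega*}$ (this is the version of $\mathcal{T}^{*}$ relevant to $\P_{0}$), and the accompanying verifications use no induction at all. Likewise, the interpretations of the schemas of $\T_{\st}^{*}$ and of $\textsf{I}$ and $\HAC_{\INT}$ are checked `by hand' in \cite{brie}*{\S6--7} by manipulating finite sequences and invoking previously-established instances of the interpretation; these arguments are of very low logical complexity and go through verbatim over $\textsf{E-PRA}^{\omega*}$. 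In particular $\HAC_{\INT}$ and $\textsf{I}$, being Herbrandised, are interpreted without any appeal to induction.

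The one delicate point is the external induction axiom $\textsf{IA}^{\st}$ (present in $\P_{0}$ via $\textsf{E-PRA}^{\omega*}_{\st}$): its $S_{\st}$-interpretation is realized in \cite{brie} by a term defined by recursion on a type-$0$ variable --- so by $R_{0}$, which $\textsf{E-PRA}^{\omega*}$ has even when the value type is higher --- and the proof that this realizer works proceeds by an internal induction whose formula need not be quantifier-free. Since $\textsf{E-PRA}^{\omega*}$ supplies only quantifier-free induction, this verification must first be brought into quantifier-free shape, which amounts to Skolemising a $(\forall^{0}\exists^{0})$-matrix; $\QFAC^{1,0}$ supplies exactly the required Skolem functions, as in the familiar Kohlenbach-style proof mining over $\RCAo$. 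This is why $\QFAC^{1,0}$ is adjoined both to $\P_{0}$ and to the target $\RCAo=\textsf{E-PRA}^{\omega*}+\QFAC^{1,0}$; note that $\QFAC^{1,0}$ is internal, so its presence on the nonstandard side is harmless and merely mirrors its presence in $\RCAo$. Finally, the passage from the theorem to the corollary is untouched: for the analogue of Corollary~\ref{consresultcor} one only needs $[\Phi(\underline a)]^{S_{\st}}\equiv\Phi(\underline a)$ for $\Phi$ a normal form, and this rests solely on clauses (i)--(v) of \cite{brie}*{Def.~7.1}, which are blind to the choice of base theory. I expect the sole real obstacle to be the bookkeeping around $\textsf{IA}^{\st}$ --- confirming that the realizer is $R_{0}$-definable and that its verification needs nothing beyond quantifier-free induction together with $\QFAC^{1,0}$; the rest is a routine re-reading of \cite{brie}*{\S5--7}.
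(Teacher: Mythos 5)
Your overall strategy---re-opening the proof of \cite{brie}*{Theorem 7.7} and checking that it survives the passage to a weaker base theory---is exactly the paper's strategy; the paper compresses this into the single observation that the proof of \cite{brie}*{Theorem 7.7} goes through over any fragment containing \textsf{EFA} ($\textsf{I}\Delta_{0}+\textsf{EXP}$), since exponentiation is all that is needed to manipulate the finite sequences of witnesses. However, the one place where you do substantive work contains a genuine problem. You treat $\textsf{IA}^{\st}$ as ``present in $\P_{0}$ via $\textsf{E-PRA}^{\omega*}_{\st}$'', but the corollary defines $\P_{0}$ as $\textsf{E-PRA}^{\omega*}+\T_{\st}^{*}+\HAC_{\INT}+\textsf{I}+\QFAC^{1,0}$: it is built from $\textsf{E-PRA}^{\omega*}$ plus the schema $\T_{\st}^{*}$ alone, and the full external induction axiom $\textsf{IA}^{\st}$ is deliberately \emph{not} on the list. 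So the ``one delicate point'' you isolate does not arise for the system actually being considered.

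This matters because your proposed repair for that case would not work anyway. The $S_{\st}$-interpretation of an instance of $\textsf{IA}^{\st}$ has the shape $\forall \tup x\,\exists \tup y\in t(\tup x)\,\varphi$ where $\varphi$ is an arbitrary \emph{internal} formula, possibly with quantifiers over all finite types; the verification that the recursively defined realizer works is an induction on a formula of that complexity. $\QFAC^{1,0}$ only Skolemises quantifier-free matrices with type-$1$/type-$0$ quantifier prefixes, so it cannot in general reduce this verification to the quantifier-free induction available in $\textsf{E-PRA}^{\omega*}$. Dropping $\textsf{IA}^{\st}$ from $\P_{0}$ (retaining only the induction matching the internal strength of $\textsf{E-PRA}^{\omega*}$) is precisely how the paper sidesteps this; $\QFAC^{1,0}$ appears on both sides simply because $\RCAo$ is \emph{defined} as $\textsf{E-PRA}^{\omega}+\QFAC^{1,0}$, not because it is needed to verify external induction. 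The remainder of your case analysis (combinators, sequence operations, $\T_{\st}^{*}$, $\textsf{I}$, $\HAC_{\INT}$, and the passage from the theorem to the corollary) is correct and agrees with the paper.
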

\begin{proof}
The proof of \cite{brie}*{Theorem 7.7} goes through for any fragment of \textsf{E-PA}$^{\omega{*}}$ which includes \textsf{EFA}, sometimes also called $\textsf{I}\Delta_{0}+\textsf{EXP}$.  
In particular, the exponential function is (all what is) required to `easily' manipulate finite sequences.    
\end{proof}

\subsection{Notations}
We mostly use the notations from \cite{brie}, some of which we repeat.  
\begin{rem}[Notations]\label{notawin}\rm
We write $(\forall^{\st}x^{\tau})\Phi(x^{\tau})$ and $(\exists^{\st}x^{\sigma})\Psi(x^{\sigma})$ as short for 
$(\forall x^{\tau})\big[\st(x^{\tau})\di \Phi(x^{\tau})\big]$ and $(\exists x^{\sigma})\big[\st(x^{\sigma})\wedge \Psi(x^{\sigma})\big]$.     
A formula $A$ is `internal' if it does not involve $\st$; the formula $A^{\st}$ is defined from $A$ by appending `st' to all quantifiers (except bounded number quantifiers).    
\end{rem}
Secondly, we will use the usual notations for rational and real numbers and functions as introduced in \cite{kohlenbach2}*{p.\ 288-289} (and \cite{simpson2}*{I.8.1} for the former).  
\begin{defi}[Real numbers etc.]\label{keepinitreal}\rm
A (standard) real number $x$ is a (standard) fast-converging Cauchy sequence $q_{(\cdot)}^{1}$, i.e.\ $(\forall n^{0}, i^{0})(|q_{n}-q_{n+i})|<_{0} \frac{1}{2^{n}})$.  
We freely make use of Kohlenbach's `hat function' from \cite{kohlenbach2}*{p.\ 289} to guarantee that every sequence $f^{1}$ can be viewed as a real.  We also use the notation $[x](k):=q_{k}$ for the $k$-th approximation of real numbers.    
Two reals $x, y$ represented by $q_{(\cdot)}$ and $r_{(\cdot)}$ are \emph{equal}, denoted $x=_{\R}y$, if $(\forall n)(|q_{n}-r_{n}|\leq \frac{1}{2^{n}})$. Inequality $<_{\R}$ is defined similarly.         
We also write $x\approx y$ if $(\forall^{\st} n)(|q_{n}-r_{n}|\leq \frac{1}{2^{n}})$ and $x\gg y$ if $x>_{\R}y\wedge x\not\approx y$.  Functions $F:\R\di \R$ are represented by $\Phi^{1\di 1}$ such that 
\be\tag{\textsf{RE}}\label{furg}
(\forall x, y)(x=_{\R}y\di \Phi(x)=_{\R}\Phi(y)),
\ee
 i.e.\ equal reals are mapped to equal reals.  Finally, sets are denoted $X^{1}, Y^{1}, Z^{1}, \dots$ and are given by their characteristic functions $f^{1}_{X}$, i.e.\ $(\forall x^{0})[x\in X\asa f_{X}(x)=1]$, where $f_{X}^{1}$ is assumed to be binary.      
\end{defi}
Thirdly, we use the usual extensional notion of equality.  
\begin{rem}[Equality]\label{equ}\rm
Equality between natural numbers `$=_{0}$' is a primitive.  Equality `$=_{\tau}$' for type $\tau$-objects $x,y$ is then defined as follows:
\be\label{aparth}
[x=_{\tau}y] \equiv (\forall z_{1}^{\tau_{1}}\dots z_{k}^{\tau_{k}})[xz_{1}\dots z_{k}=_{0}yz_{1}\dots z_{k}]
\ee
if the type $\tau$ is composed as $\tau\equiv(\tau_{1}\di \dots\di \tau_{k}\di 0)$.
In the spirit of Nonstandard Analysis, we define `approximate equality $\approx_{\tau}$' as follows:
\be\label{aparth2}
[x\approx_{\tau}y] \equiv (\forall^{\st} z_{1}^{\tau_{1}}\dots z_{k}^{\tau_{k}})[xz_{1}\dots z_{k}=_{0}yz_{1}\dots z_{k}]
\ee
with the type $\tau$ as above.  
The system $\P$ includes the \emph{axiom of extensionality}: 
\be\label{EXT}\tag{\textsf{E}}  
(\forall  x^{\rho},y^{\rho}, \varphi^{\rho\di \tau}) \big[x=_{\rho} y \di \varphi(x)=_{\tau}\varphi(y)   \big].
\ee
However, as noted in \cite{brie}*{p.\ 1973}, the so-called axiom of \emph{standard} extensionality \eqref{EXT}$^{\st}$ is problematic and cannot be included in $\P$.  
\end{rem} 
\section{The Loeb measure in $\P$}\label{LEB}
In this section, we discuss the Loeb measure in the context of internal set theory $\IST$, and possible computational aspects thereof.  
This development takes place in the system $\P$ from the previous section.  We assume basic familiarity with \emph{Reverse Mathematics} (RM for short) and we refer to \cites{simpson1, simpson2} for an overview of the latter program, the definition of the `Big Five', and the base theory $\RCA_{0}$ in particular.    

\medskip

First of all, the usual definition of the Loeb measure $L_{M}$ (See Definition \ref{DEP} below) makes use of 
\emph{external} sets, and therefore seems meaningless in $\IST$.  Nonetheless, we shall see that one can give meaning to the formula `$L_{M}(A)=0$' inside $\P$, even though `$L_{M}(A)$' strictly speaking does not exist.  
This is reminiscent of the situation of measure theory in Reverse Mathematics (See e.g.\ \cite{yussie, yuppie}), where the Lebesgue measure is defined as follows in \cite{simpson2}*{X.1.2-3}.  
\bdefi[Lebesgue measure $\lambda$] For $\|g\|:=\int_{0}^{1}g(x)dx$, we define
\[
\lambda(U):=\sup\{\|g\|: g\in C([0,1]) \wedge 0\leq g\leq1 \wedge (\forall x\in [0,1]\setminus U)(g(x)=0) \}.
\]
\edefi
Of course, this supremum does not necessarily exist in weak systems such as the base theory $\RCA_{0}$ of RM, but the formula `$\lambda(U)=_{\R}0$' defined as follows makes perfect sense in weak systems such as $\RCA_{0}$:  
\[
[\lambda(U)=_{\R}0] \equiv (\forall g\in C([0,1])\big[ (0\leq g\leq1 \wedge (\forall x\in [0,1]\setminus U)g(x)=0)\di \|g\|=0\big].
\]
Note that the existence of the Lebesgue measure for open sets is actually equivalent to $\ACA_{0}$ by \cite{simpson2}*{p.\ 391}.  We conclude that while the Lebesgue measure $\lambda$ may not exist in weak systems of RM, the formula $\lambda(U)=_{\R}0$ always is meaningful.  
 
\medskip

Below, we show that a similar trick can used to give meaning to the Loeb measure in $\IST$. 
Thus, fix nonstandard $M$ and consider the grid $\G_{M}=\{\frac{i}{2^{M}} : 0\leq i\leq 2^{M} \}$ on $[0,1]$.
The usual definition of the Loeb measure from \cite{pimpson} is as follows.  
\bdefi[Loeb measure]\label{DEP}
The Loeb measure of a set $B\subseteq\G_{M}$ is $L_{M}(B):= \st(L_{M}^{*}(B))$, where $L_{M}^{*}(B):= \frac{|B|}{2^{M}}$.    
The Loeb measure of a set $A\subseteq [0,1]$ is defined as follows.  
\be\label{cohones}
\st_{M}^{-1}(A):=\{ b\in \G_{M}: (\exists^{\st}a^{1}\in A)(a\approx b)  \}.
\ee
\begin{align*}
C\subset_{\al}D := (\forall E )(E\subset (C\setminus D)\di L_{M}^{*}(E)\approx 0) \qquad (C, D\subseteq \G_{M}).
\end{align*}
\be\label{frinku}
L_{M}(A):=\sup\{L_{M}(B): B\subseteq \G_{M}\wedge B\subset_{\al} \st_{M}^{-1}(A)  \}.
\ee
\be\label{frinku2}
L_{M}^{*}(A):=\sup\{L_{M}^{*}(B): B\subseteq \G_{M}\wedge B\subset_{\al} \st_{M}^{-1}(A)  \}.
\ee
\edefi
Note that $L_{M}(A)=\st(L_{M}^{*}(A))$ since $L_{M}(A)\approx L_{M}^{*}(A)$.  We introduced \eqref{frinku2} as the $\IST$ axiom \emph{Standard Part} is non-constructive, while the standard part map is external, and hence does not exist in $\IST$.  

\medskip

Thirdly, the set $\st_{M}^{-1}(A)$ as in \eqref{cohones} is external, and hence it seems the Loeb measure $L_{M}(A)$ as in \eqref{frinku} cannot be defined in $\IST$.  
Nonetheless, the formula `$L_{M}^{*}(A)\approx 0$' (or equivalently `$L_{M}(A)=0$') does make sense  in $\IST$, as follows:
\begin{align*}
L_{M}^{*}(A)\approx 0
&\equiv (\forall B\subseteq \G_{M})\big[ B\subset_{\al}\st_{M}^{-1}(A) \di    L_{M}^{*}(B)\approx 0\big]\\
&\equiv (\forall B\subseteq \G_{M})\big[ (\forall E)(E \subset (B\setminus \st_{M}^{-1}(A))   \di L_{M}^{*}(E)\approx 0) \di    L_{M}^{*}(B)\approx 0\big]\\
&\equiv (\forall B\subseteq \G_{M})\Big[ \big[(\forall E)\big(  (\forall e\in E)(e\in B\wedge (\forall^{\st}a\in A)(a\not\approx e)   ) \di L_{M}^{*}(E)\approx 0  \big)\big] \di    L_{M}^{*}(B)\approx 0\Big].
\end{align*}
Note that the final formula is a formula of $\IST$ (and can be expressed in far weaker systems such as $\P$).  The formula `$a\in A$' can be replaced by any formula $\Phi(a)$, and we can thus give meaning to the formula $L_{M}^{*}(\{a:\Phi(a)\})\approx 0$.  
We can now say that a property $\Phi$ `holds \emph{almost everywhere} in $[0,1]$' if $L_{M}^{*}(\{a\in [0,1]:\neg\Phi(a)\})\approx 0$.  

\medskip

Fourth, recall that we can obtain computational information from formulas of the form $(\forall^{\st}x)(\exists^{\st}y)\varphi(x,y)$, where $\varphi$ is internal by Corollary \ref{consresultcor}.  
We refer to such formulas as `normal forms'.  
Let us now formulate a normal form for the formula `$L_{M}^{*}(A)\approx 0$'.  A normal form for $(\forall e\in E)(e\in B\wedge (\forall^{\st}a\in A)(a\not\approx e)$ is as follows:
\begin{align}
&~~~(\forall e\in E)(e\in B\wedge (\forall^{\st}a\in A)(a\not\approx e))\notag\\
& \equiv\textstyle(\forall^{\st}a\in A)(\forall e\in E)(\exists^{\st}k^{0})(e\in B\wedge(|a-e|>\frac1k))\notag\\
& \equiv\textstyle(\forall^{\st}a\in A)(\exists^{\st}K^{0^{*}})(\forall e\in E)(\exists k^{0}\in K)(e\in B\wedge(|a-e|>\frac1k))\label{oji}\\
& \equiv\textstyle(\forall^{\st}a\in A)(\exists^{\st}l^{0})\big[(\forall e\in E)(e\in B\wedge(|a-e|>\frac1l))\big],\label{oji2}
\end{align}
where \eqref{oji} follows from applying idealisation \textsf{I}, and \eqref{oji2} follows from defining $l:=\max_{i<|K|}K(i)$ in \eqref{oji}.  
Let $A_{0}(a, E, B, l)$ be the formula in square brackets in \eqref{oji2}, and note that `$L_{M}^{*}(A)\approx 0$' is:
\begin{align*}
&~~~ (\forall B\subseteq \G_{M})\Big[ \big[(\forall E)\big(  (\forall e\in E)(e\in B\wedge (\forall^{\st}a\in A)(a\not\approx e)   ) \di L_{M}^{*}(E)\approx 0  \big)\big] \di    L_{M}^{*}(B)\approx 0\Big].\\
&\equiv\textstyle (\forall B\subseteq \G_{M})\Big[ \big[(\forall E)\big( (\forall^{\st}a\in A)(\exists^{\st}l)A_{0}(a, E, B, l)   \di (\forall^{\st}k)|L_{M}^{*}(E) |\frac{1}k  \big)\big] \di    L_{M}^{*}(B)\approx 0\Big].\\
&\equiv\textstyle (\forall B\subseteq \G_{M})\Big[ \big[(\forall^{\st}k, g)(\forall E)\big( (\forall^{\st}a\in A)A_{0}(a,, E, B, g(a))   \di |L_{M}^{*}(E)|\leq \frac{1}k  \big)\big] \di    L_{M}^{*}(B)\approx 0\Big].\\
&\equiv\textstyle (\forall B\subseteq \G_{M})\Big[ \big[(\forall^{\st}k, g)(\forall E)(\exists^{\st}a\in A)\big( A_{0}(a, E, B, g(a))   \di |L_{M}^{*}(E)|\leq \frac{1}k  \big)\big] \di    L_{M}^{*}(B)\approx 0\Big].\\
&\equiv\textstyle (\forall B\subseteq \G_{M})\Big[ \big[(\forall^{\st}k, g)(\exists^{\st} b^{1^{*}})\underline{(\forall E)(\exists a\in b)\big( a\in A\wedge A_{0}(a, E, B, g(a))   \di |L_{M}^{*}(E)|\leq \frac{1}k}  \big)\big] \di    L_{M}^{*}(B)\approx 0\Big],
\end{align*}
and let $B_{0}(B, k, g , b, A)$ be the underlined formula.  Hence, `$L_{M}^{*}(A)\approx 0$' becomes:
\begin{align*}
&~~~\textstyle (\forall B\subseteq \G_{M})\Big[ \big[(\forall^{\st}k, g)(\exists^{\st} b^{1^{*}})B_{0}(B, k, g, b, A)  \big)\big] \di    L_{M}^{*}(B)\approx 0\Big],\\
&\equiv\textstyle (\forall^{\st}k', h)(\forall B\subseteq \G_{M})\Big[ \big[(\forall^{\st}k, g)B_{0}{(B, k,g,h(k,g), A)}\big] \di    |L_{M}^{*}(B)|\leq \frac{1}{k'}\Big].\\
&\equiv\textstyle (\forall^{\st}k', h)(\forall B\subseteq \G_{M})(\exists^{\st}k, g)\Big[ \big[B_{0}{(B, k,g,h(k,g), A)}\big] \di    |L_{M}^{*}(B)|\leq \frac{1}{k'}\Big].\\
&\equiv\textstyle (\forall^{\st}k', h)(\exists^{\st}w)(\forall B\subseteq \G_{M})(\exists k, g \in w)\Big[ \big[B_{0}{(B, k,g,h(k,g), A)}\big] \di    |L_{M}^{*}(B)|\leq \frac{1}{k'}\Big].\\
\end{align*}
Thus, the final formula is a normal form of `$L^{*}_{M}(A)\approx 0$'.  

\medskip

Fifth, the formula `$L^{*}_{M}(A)\approx 0$' involves a nonstandard number $M$, and one will usually encounter the latter formula somewhere in the scope of the quantifier $(\forall M^{0})(\neg\st(M)\di \dots)$.   Such a nonstandard quantifier place nicely with our normal forms, not just for numbers, but for any finite type $\rho$.
\begin{thm}
For internal $\varphi$, the formula
\be\label{petzi}
(\forall M^{\rho})\big[\neg \st_{\rho}(M)\di (\forall^{\st}x)(\exists^{\st}y)\varphi(x, y, M)\big], 
\ee
is equivalent to a normal form.  
\end{thm}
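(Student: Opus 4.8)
The plan is to rewrite \eqref{petzi} into normal form in three moves: (1) replace the external antecedent $\neg\st_{\rho}(M)$ by a formula whose matrix is \emph{internal}; (2) prenex the resulting standardness quantifiers past the internal quantifier $(\forall M^{\rho})$; and (3) apply idealisation $\textsf{I}$ once, exactly as in the step producing \eqref{oji}.

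For move (1), the crucial observation is that, for \emph{every} finite type $\rho$, one has provably in $\textsf{E-PA}^{\omega*}_{\st}$
\be\label{stnfplan}
\st_{\rho}(M)\leftrightarrow(\exists^{\st}N^{\rho})(M=_{\rho}N).
\ee
Here the forward direction is witnessed by $N:=M$ (reflexivity of $=_{\rho}$), while the backward direction is an instance of the schema `$\st(x)\wedge x=y\di\st(y)$' from item~(1) of Definition~\ref{debs}. What matters is that $M=_{\rho}N$ is \emph{internal} --- by \eqref{aparth} it is a (possibly unbounded) combination of the primitive $=_{0}$ --- so that $\neg\st_{\rho}(M)$ is equivalent to $(\forall^{\st}N^{\rho})(M\neq_{\rho}N)$ with $M\neq_{\rho}N$ internal. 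This is exactly what lets the argument proceed uniformly in the type $\rho$.

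Writing $\Psi(M)\equiv(\forall^{\st}x)(\exists^{\st}y)\varphi(x,y,M)$, formula \eqref{petzi} equals $(\forall M^{\rho})\big[\st_{\rho}(M)\vee\Psi(M)\big]$, i.e.\ by \eqref{stnfplan}
\[
(\forall M^{\rho})\big[(\exists^{\st}N^{\rho})(M=_{\rho}N)\vee(\forall^{\st}x)(\exists^{\st}y)\varphi(x,y,M)\big].
\]
As neither $x$ nor $N$ occurs in the first disjunct, and as every finite type contains a (standard) closed term so that all the $(\exists^{\st}\,\cdot\,)$ below are non-vacuous, classical logic --- available in $\P$ --- distributes quantifiers (using $(\exists^{\st}N)A\vee(\exists^{\st}y)B\leftrightarrow(\exists^{\st}N,y)(A\vee B)$) and commutes the $(\forall^{\st}x)$ to the front, rewriting this equivalently as $(\forall^{\st}x)(\forall M^{\rho})(\exists^{\st}N^{\rho},y)\,\chi(x,M,N,y)$, where $\chi(x,M,N,y)\equiv (M=_{\rho}N)\vee\varphi(x,y,M)$ is internal. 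A single application of $\textsf{I}$ --- with $x$ as parameter, pulling the block $(\exists^{\st}N,y)$ out of $(\forall M^{\rho})$ just as \eqref{oji} is obtained --- then yields the $\P$-equivalent statement
\[
(\forall^{\st}x)(\exists^{\st}W)(\forall M^{\rho})\big(\exists\langle N,y\rangle\in W\big)\,\chi(x,M,N,y),
\]
whose matrix is internal; the converse direction of this $\textsf{I}$-step is immediate since elements of a standard finite sequence are themselves standard. As the displayed formula has the shape $(\forall^{\st}x)(\exists^{\st}W)(\text{internal})$, it is a normal form.

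The only genuine obstacle is move (1): for composite $\rho$ the external antecedent $\neg\st_{\rho}(M)$ has no obvious `internal' reformulation, and it is \eqref{stnfplan} that dispatches this uniformly. The (mild) cost is that for composite $\rho$ the internal matrix $M\neq_{\rho}N$ carries unbounded internal quantifiers; this is harmless, because both the axiom $\textsf{I}$ and the notion of `normal form' require the matrix only to be internal, not quantifier-free.
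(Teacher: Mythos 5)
Your proposal is correct and follows essentially the same route as the paper: replace $\neg\st_{\rho}(M)$ by $(\forall^{\st}r^{\rho})(M\neq_{\rho}r)$ via the schema $\st(x)\wedge x=y\di\st(y)$ of Definition~\ref{debs}, prenex the standard quantifiers past $(\forall M^{\rho})$, and apply idealisation \textsf{I} once to obtain $(\forall^{\st}x)(\exists^{\st}w)(\forall M^{\rho})(\exists r,y\in w)[M\neq_{\rho}r\di\varphi(x,y,M)]$. Writing the matrix as a disjunction $(M=_{\rho}N)\vee\varphi$ rather than an implication is a purely cosmetic difference, and your added remarks (internality of $=_{\rho}$ despite its unbounded quantifiers, non-vacuity of the existential blocks) only make explicit what the paper leaves implicit.
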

\begin{proof}
First of all, \eqref{petzi} is equivalent to the following by Definition \ref{debs}:
\[
(\forall M^{\rho})\big[ (\forall^{\st}r^{\rho})(M\ne_{\rho} k)\di (\forall^{\st}x)(\exists^{\st}y)\varphi(x, y, M)\big],
\]
where `$x\ne_{\rho}y$' is an internal formula.
Pushing outside the standard quantifiers as far as possible, we obtain
\[
(\forall^{\st}x)(\forall M^{\rho})(\exists^{\st}r, y)\big[ (M\ne_{\rho} r \di \varphi(x, y, M)\big],
\]
and applying idealisation \textsf{I}, we obtain the following normal form:
\[
(\forall^{\st}x)(\exists^{\st}w)(\forall M^{\rho})(\exists r, y\in w)\big[ (M\ne r \di \varphi(x, y, M)\big].
\]
\end{proof}
By the previous theorem, one can partition a space in infinitesimal pieces $\frac{1}{M}$ for nonstandard $M$, and the associated quantifier.  

\medskip

Sixth, it is a natural question which sets $A$ can be studied using the above normal form for $L^{*}_{M}(A)\approx 0$.
As it turns out, in the definition of $L^{*}_{M}(A)\approx 0$, one can replace `$a\in A$' by any normal form $(\forall^{\st}z)(\exists^{\st}w)\varphi(z, w, a)$, and the resulting 
modification of $L^{*}_{M}(A)\approx 0$ remains a normal form.  In particular, \eqref{oji2} becomes the following normal form with this replacement: 
\begin{align*}
& ~~~\textstyle(\forall^{\st}a\in A)(\exists^{\st}l^{0})\big[(\forall e\in E)(e\in B\wedge(|a-e|>\frac1l))\big]\\
& \equiv\textstyle(\forall^{\st}a)\big[(\forall^{\st}z)(\exists^{\st}a)\varphi(z, w, a)\di   (\exists^{\st}l^{0})\big[(\forall e\in E)(e\in B\wedge(|a-e|>\frac1l))\big]\\
& \equiv\textstyle(\forall^{\st}a, g)\big[(\forall^{\st}z)\varphi(z, g(z), a)\di   (\exists^{\st}l^{0})\big[(\forall e\in E)(e\in B\wedge(|a-e|>\frac1l))\big]\big]\\
& \equiv\textstyle(\forall^{\st}a, g)(\exists^{\st}z, l^{0})\big[\varphi(z, g(z), a)\di   \big[(\forall e\in E)(e\in B\wedge(|a-e|>\frac1l))\big]\big].
\end{align*}
One then obtains a normal form for $L^{*}_{M}(A)\approx 0$ in exactly the same way as for \eqref{oji2}.  
By way of example, we may take `$a\in A$' to be `the function $f$ is nonstandard continuous at $a$', and `$L^{*}_{M}(A)\approx 0$' still has a normal form.    
The same holds for formulas of the form $(\exists^{\st}u)(\forall^{\st}z)(\exists^{\st}w)\varphi(u,z, w, a)$ by the previous, and thus also for \emph{negations} of normal forms.  

\medskip

Seventh, we consider the following alternative definition of the Loeb measure.
\bdefi[Second Loeb measure]\label{DEP2}
The Loeb measure of a set $B\subseteq\G_{M}$ is $L_{M}(B):= \st(L_{M}^{*}(B))$, where $L_{M}^{*}(B):= \frac{|B|}{2^{M}}$.    
The \emph{second} Loeb measure of a set $A\subseteq [0,1]$ is defined as follows.  
\[
\st_{M, 2}^{-1}(A):=\{ b\in \G_{M}: (\exists^{\st}a^{1}, c^{1}\in \R)(a\lessapprox b\lessapprox c)\wedge (\forall^{\st}e^{1}\in \R)(e\in [a, c]\di e\in A))  \}.
\]
\begin{align*}
C\subset_{\al}D := (\forall E )(E\subset (C\setminus D)\di L_{M}^{*}(E)\approx 0) \qquad (C, D\subseteq \G_{M}).
\end{align*}
\be\label{frinku23}
L_{M,2}(A):=\sup\{L_{M}(B): B\subseteq \G_{M}\wedge B\subset_{\al} \st_{M,2}^{-1}(A)  \}.
\ee
\be\label{frinku22}
L_{M,2}^{*}(A):=\sup\{L_{M}^{*}(B): B\subseteq \G_{M}\wedge B\subset_{\al} \st_{M,2}^{-1}(A)  \}.
\ee
\edefi
Note that the formula `$x\not\in \st_{M, 2}^{-1}(A))$' has a normal form similar to that of `$\st_{M}^{-1}(A)$', hence the formula $L^{*}_{M,2}(A)\approx 0$ also has normal form.  

\medskip

In conclusion, we cannot define the Loeb measure $L_{M}(A)$ in $\IST$, but we can give meaning to formula `$L_{M}(A)=0$' (and any other (in)equality in the same way).   
Furthermore, such formulas have normal forms (and therefore carry numerical information), even if we quantify over the nonstandard number $M$.

\section{Bibliography}
\begin{biblist}
\bib{brie}{article}{
  author={van den Berg, Benno},
  author={Briseid, Eyvind},
  author={Safarik, Pavol},
  title={A functional interpretation for nonstandard arithmetic},
  journal={Ann. Pure Appl. Logic},
  volume={163},
  date={2012},
  pages={1962--1994},
}

\bib{damirzoo}{misc}{
  author={Dzhafarov, Damir D.},
  title={Reverse Mathematics Zoo},
  note={\url {http://rmzoo.uconn.edu/}},
}

\bib{loeb1}{book}{
  author={Hurd, Albert E.},
  author={Loeb, Peter A.},
  title={An introduction to nonstandard real analysis},
  series={Pure and Applied Mathematics},
  volume={118},
  publisher={Academic Press Inc.},
  place={Orlando, FL},
  date={1985},
  pages={xii+232},
}

\bib{kohlenbach3}{book}{
  author={Kohlenbach, Ulrich},
  title={Applied proof theory: proof interpretations and their use in mathematics},
  series={Springer Monographs in Mathematics},
  publisher={Springer-Verlag},
  place={Berlin},
  date={2008},
  pages={xx+532},
}

\bib{kohlenbach2}{article}{
  author={Kohlenbach, Ulrich},
  title={Higher order reverse mathematics},
  conference={ title={Reverse mathematics 2001}, },
  book={ series={Lect. Notes Log.}, volume={21}, publisher={ASL}, },
  date={2005},
  pages={281--295},
}

\bib{wownelly}{article}{
  author={Nelson, Edward},
  title={Internal set theory: a new approach to nonstandard analysis},
  journal={Bull. Amer. Math. Soc.},
  volume={83},
  date={1977},
  number={6},
  pages={1165--1198},
}

\bib{robinson1}{book}{
  author={Robinson, Abraham},
  title={Non-standard analysis},
  publisher={North-Holland},
  place={Amsterdam},
  date={1966},
  pages={xi+293},
}

\bib{samzoo}{article}{
  author={Sanders, Sam},
  title={The taming of the Reverse Mathematics zoo},
  year={2015},
  journal={Submitted, \url {http://arxiv.org/abs/1412.2022}},
}

\bib{samzooII}{article}{
  author={Sanders, Sam},
  title={The refining of the taming of the Reverse Mathematics zoo},
  year={2016},
  journal={To appear in Notre Dame Journal for Formal Logic, \url {http://arxiv.org/abs/1602.02270}},
}

\bib{sambon}{article}{
  author={Sanders, Sam},
  title={The unreasonable effectiveness of Nonstandard Analysis},
  year={2015},
  journal={Submitted, \url {http://arxiv.org/abs/1508.07434}},
}

\bib{simpson1}{collection}{
  title={Reverse mathematics 2001},
  series={Lecture Notes in Logic},
  volume={21},
  editor={Simpson, Stephen G.},
  publisher={ASL},
  place={La Jolla, CA},
  date={2005},
  pages={x+401},
}

\bib{simpson2}{book}{
  author={Simpson, Stephen G.},
  title={Subsystems of second order arithmetic},
  series={Perspectives in Logic},
  publisher={CUP},
  date={2009},
  pages={xvi+444},
}

\bib{pimpson}{article}{
  author={Simpson, Stephen G.},
  author={Yokoyama, Keita},
  title={A nonstandard counterpart of \textsf {\textup {WWKL}}},
  journal={Notre Dame J. Form. Log.},
  volume={52},
  date={2011},
  number={3},
  pages={229--243},
}

\bib{nsawork}{collection}{
  title={Nonstandard analysis for the working mathematician},
  series={Mathematics and its Applications},
  volume={510},
  editor={Wolff, Manfred},
  editor={Loeb, Peter A.},
  publisher={Kluwer Academic Publishers},
  date={2000},
  pages={xiv+311},
}

\bib{yuppie}{article}{
  author={Yu, Xiaokang},
  title={Lebesgue convergence theorems and reverse mathematics},
  journal={Math. Logic Quart.},
  volume={40},
  date={1994},
  number={1},
  pages={1--13},
}

\bib{yussie}{article}{
  author={Yu, Xiaokang},
  author={Simpson, Stephen G.},
  title={Measure theory and weak K\"onig's lemma},
  journal={Arch. Math. Logic},
  volume={30},
  date={1990},
  number={3},
  pages={171--180},
}

\end{biblist}

\bye